\newtheorem{theorem}{Theorem}
\newcommand{\N}{\mathbb N}
\newcommand{\Z}{\mathbb Z}
\newcommand{\R}{\mathbb R}
\newcommand{\C}{\mathbb C}
\newcommand{\K}{\textbf{\textit{K}}}
\begin{document}
\hfill
\vskip 0.9truecm
\noindent
\centerline{\Large A subset of ${\Z}^n$ whose non-computability leads}
\vskip 0.2truecm
\centerline{\Large to the existence of a Diophantine equation}
\vskip 0.2truecm
\centerline{\Large whose solvability is logically undecidable}
\vskip 0.9truecm
\centerline{\Large Apoloniusz Tyszka}
\vskip 0.9truecm
\noindent
{\bf Abstract.} For $\K \subseteq \C$,
let $B_n(\K)=\Bigl\{(x_1,\ldots,x_n) \in {\K}^n:$ for each $y_1,\ldots,y_n \in \K$ the conjunction
\[
\Bigl(\forall i \in \{1,\ldots,n\}~(x_i=1 \Longrightarrow y_i=1)\Bigr) ~\wedge
\]
\[
\Bigl(\forall i,j,k \in \{1,\ldots,n\}~(x_i+x_j=x_k \Longrightarrow y_i+y_j=y_k)\Bigr) ~\wedge
\]
\[
\forall i,j,k \in \{1,\ldots,n\}~(x_i \cdot x_j=x_k \Longrightarrow y_i \cdot y_j=y_k)
\]
implies that $x_1=y_1 \Bigr\}$. We claim that there is an algorithm that for every computable function
\mbox{$f:\N \to \N$} returns a positive integer $m(f)$, for which a second algorithm accepts on the input $f$
and any integer \mbox{$n \geq m(f)$}, and returns a tuple \mbox{$(x_1,\ldots,x_n) \in B_n(\Z)$} with \mbox{$x_1=f(n)$}.
We compute an integer tuple \mbox{$(x_1,\ldots,x_{20})$} for which the statement \mbox{$(x_1,\ldots,x_{20}) \in B_{20}(\Z)$}
is equivalent to an open Diophantine problem. We prove that if the set $B_n(\Z)$ \mbox{$(B_n(\N)$, $B_n(\N\setminus\{0\}))$}
is not computable for some $n$, then there exists a Diophantine equation whose solvability
in integers (non-negative integers, positive integers) is logically undecidable.
\vskip 1.0truecm
\noindent
{\bf Key words and phrases:} Hilbert's Tenth Problem, logically undecidable Diophantine equation.
\vskip 0.9truecm
\noindent
{\bf 2010 Mathematics Subject Classification:} 03D20, 11U05.
\newpage
For $\K \subseteq \C$,
let $B_n(\K)=\Bigl\{(x_1,\ldots,x_n) \in {\K}^n:$ for each $y_1,\ldots,y_n \in \K$ the conjunction
\[
\Bigl(\forall i \in \{1,\ldots,n\}~(x_i=1 \Longrightarrow y_i=1)\Bigr) ~\wedge
\]
\[
\Bigl(\forall i,j,k \in \{1,\ldots,n\}~(x_i+x_j=x_k \Longrightarrow y_i+y_j=y_k)\Bigr) ~\wedge
\]
\[
\forall i,j,k \in \{1,\ldots,n\}~(x_i \cdot x_j=x_k \Longrightarrow y_i \cdot y_j=y_k)
\]
implies that $x_1=y_1 \Bigr\}$.
\vskip 0.2truecm
\par
Each of the following two statements
\[
\Bigl(164,~165,~164 \cdot 165,~(164 \cdot 165)^2,
\]
\[
132,~133,~132 \cdot 133,~(132 \cdot 133)^2,
\]
\[
143,~144,~143 \cdot 144,~(143 \cdot 144)^2,~1 \Bigr) \in B_{13}(\N \setminus \{0\})
\]
and
\[
\Bigl(164,~165,~164 \cdot 165,~(164 \cdot 165)^2,
\]
\[
131,~132,~133,~132 \cdot 133,~(132 \cdot 133)^2,
\]
\[
142,~143,~144,~143 \cdot 144,~(143 \cdot 144)^2,~1\Bigr) \in B_{15}(\N)
\]
equivalently expresses that in the domain of positive integers only
the triples $(132,143,164)$ and $(143,132,164)$ solve the equation
\[
x^2(x+1)^2+y^2(y+1)^2=z^2(z+1)^2
\]
The last claim is still not proved, see \cite[p.~53]{Sierpinski1}.
\vskip 0.2truecm
\par
The following Lemma is a special case of the result presented in \cite[p.~3]{Skolem}.
\vskip 0.2truecm
\noindent
{\bf Lemma}~(\cite[p.~177,~Lemma 2.1]{Tyszka2}). {\em For each non-zero integer $x$ there exist integers
$a$, $b$ such that $ax=(2b-1)(3b-1)$.}
\begin{proof}
Write $x$ as $(2y-1) \cdot 2^m$, where $y \in \Z$ and $m \in \Z \cap [0,\infty)$.
Obviously, \hbox{$\frac{\textstyle 2^{2m+1}+1}{\textstyle 3} \in \Z$.}
By Chinese Remainder Theorem, we can find an integer $b$ such that
\hbox{$b \equiv y {\rm ~(mod~} 2y-1)$}
and \hbox{$b \equiv \frac{\textstyle 2^{2m+1}+1}{\textstyle 3} {\rm ~(mod~} 2^m)$.}
Thus, \hbox{$\frac{\textstyle 2b-1}{\textstyle 2y-1} \in \Z$} and
\hbox{$\frac{\textstyle 3b-1}{\textstyle 2^m} \in \Z$.}
Hence
\[
\frac{(2b-1)(3b-1)}{x}=\frac{2b-1}{2y-1} \cdot \frac{3b-1}{2^m} \in \Z
\]
\end{proof}
\par
Let $b=200526827$. Then, 
\[
667378345 \cdot (132 \cdot 133 \cdot 143 \cdot 144)=(2b-1)(3b-1)
\]
\begin{theorem}\label{the1}
The statement
\[
\Bigl(164 \cdot 165,~(164 \cdot 165)^2,~164,~165,
\]
\[
132,~133,~132 \cdot 133,~(132 \cdot 133)^2,
\]
\[
143,~144,~143 \cdot 144,~(143 \cdot 144)^2,~132 \cdot 133 \cdot 143 \cdot 144,
\]
\[
b,~2b,~2b-1,~3b-1,~(2b-1)(3b-1),\frac{(2b-1)(3b-1)}{132 \cdot 133 \cdot 143 \cdot 144},~1\Bigr) \in B_{20}(\Z)
\]
equivalently expresses that in the integer domain only the triples $(x,y,z) \in$
\[
\Bigl(\{-133,132\} \times \{-144,143\} \times \{-165,164\}\Bigr) \cup \Bigl(\{-144,143\} \times \{-133,132\} \times \{-165,164\}\Bigr)
\]
solve the system
\begin{displaymath}
\left\{
\begin{array}{rcl}
x^2(x+1)^2+y^2(y+1)^2 &=& z^2(z+1)^2 \\
x(x+1)y(y+1) &\neq& 0
\end{array}
\right.
\end{displaymath}
\end{theorem}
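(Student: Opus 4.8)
The plan is to read membership in $B_{20}(\Z)$ as the assertion that the additive, multiplicative, and ``equals $1$'' relations holding among the twenty fixed integers force $y_1$ to equal $x_1=164 \cdot 165=27060$ for every integer tuple $(y_1,\ldots,y_{20})$ that respects those relations. First I would list the relations that the given tuple satisfies. The multiplicative relations $y_3 y_4=y_1$, $y_1 y_1=y_2$, $y_5 y_6=y_7$, $y_7 y_7=y_8$, $y_9 y_{10}=y_{11}$, $y_{11} y_{11}=y_{12}$, together with the additive relations $y_3+y_{20}=y_4$, $y_5+y_{20}=y_6$, $y_9+y_{20}=y_{10}$ and $y_{20}=1$, rewrite everything in terms of $x:=y_5$, $y:=y_9$, $z:=y_3$: one gets $y_7=x(x+1)$, $y_{11}=y(y+1)$, $y_1=z(z+1)$, and $y_8=(x(x+1))^2$, $y_{12}=(y(y+1))^2$, $y_2=(z(z+1))^2$. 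The single additive relation $y_8+y_{12}=y_2$ then becomes the target equation $x^2(x+1)^2+y^2(y+1)^2=z^2(z+1)^2$, while $y_1=z(z+1)$ records the quantity that membership must pin down.

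Second I would treat positions $14$ through $19$ as a gadget enforcing the inequality $x(x+1)y(y+1) \neq 0$. The relations $y_{14}+y_{14}=y_{15}$, $y_{16}+y_{20}=y_{15}$, $y_{16}+y_{14}=y_{17}$ give $y_{16}=2y_{14}-1$ and $y_{17}=3y_{14}-1$; with $y_{16} y_{17}=y_{18}$, $y_7 y_{11}=y_{13}$, and $y_{13} y_{19}=y_{18}$ this yields $x(x+1)y(y+1) \cdot y_{19}=(2y_{14}-1)(3y_{14}-1)$. Since $2y_{14}-1$ is odd and $3y_{14}-1 \neq 0$ for every integer $y_{14}$, the right-hand side is never zero, so any tuple respecting the relations has $y_{13}=x(x+1)y(y+1) \neq 0$. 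Conversely the Lemma supplies, for each nonzero value of $y_{13}$, integers playing the roles of $y_{14}$ and $y_{19}$; thus the gadget is satisfiable precisely when the inequality holds. Hence every admissible $(y_1,\ldots,y_{20})$ determines an integer solution $(x,y,z)$ of the full system, with $y_1=z(z+1)$.

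Third I would establish the equivalence in two halves. Because $z(z+1)=27060$ has exactly the integer roots $z \in \{164,-165\}$, membership in $B_{20}(\Z)$ is equivalent to the statement that every integer solution of the system has $z \in \{164,-165\}$. For the direction from the solution-set claim to membership I would use only the relations above: any admissible tuple yields a solution, whose $z$-coordinate lies in $\{164,-165\}$, forcing $y_1=z(z+1)=27060=x_1$. For the reverse direction I would, given any solution $(x_0,y_0,z_0)$, build an explicit admissible tuple realizing it --- setting the oblong and square entries as above and invoking the Lemma to fill positions $14$ through $19$ --- so that membership forces $z_0(z_0+1)=27060$ and hence $z_0 \in \{164,-165\}$. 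Finally I would convert ``$z \in \{164,-165\}$ for every solution'' into ``only the listed triples solve the system'' by two finite verifications: that $(132,143,164)$ solves the equation (whence, by the invariance of $w(w+1)$ under $w \mapsto -1-w$ and the symmetry $x \leftrightarrow y$, so do all listed triples), and that the only representation $27060^2=u^2+v^2$ with $u,v$ positive oblong numbers is $\{u,v\}=\{17556,20592\}=\{132 \cdot 133,\,143 \cdot 144\}$.

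The main obstacle I anticipate is bookkeeping rather than depth. For the construction in the reverse direction the fabricated tuple must respect every relation that actually holds among the twenty specific integers, so I must confirm that the relations listed above are exhaustive --- that there is no accidental additive or multiplicative coincidence, particularly among the large entries $b$, $2b$, $2b-1$, $3b-1$, $(2b-1)(3b-1)$ and the quotient $\frac{(2b-1)(3b-1)}{132 \cdot 133 \cdot 143 \cdot 144}=667378345$ --- since any unlisted relation would impose an extra constraint that the fabricated tuple need not satisfy. The second delicate point is the finite but non-trivial sum-of-two-squares computation pinning the oblong legs of $27060^2$; this is what upgrades ``$z$ is determined'' to ``the whole triple is determined,'' and it is purely computational because $z$ has already been confined to two values.
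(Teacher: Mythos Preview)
Your proposal is correct and follows essentially the paper's approach: the paper's proof is a \textsl{MuPAD} computation that outputs precisely the sixteen additive and multiplicative relations you use (thereby dispatching the exhaustiveness bookkeeping you flagged), and then simply asserts that ``the triples displayed on the output justify the equivalence.'' You are in fact more explicit than the paper, which neither spells out the logical argument nor mentions the finite oblong--leg sum--of--two--squares verification needed to upgrade ``every solution has $z(z+1)=27060$'' to the full listed--triples claim.
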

\newpage
\begin{proof}
The following {\sl MuPAD} code
\begin{quote}
\begin{verbatim}
y:=(132*133*143*144/64+1)/2:
z:=(2^13+1)/3:
print(`b=`):
b:=numlib::ichrem([y,z],[2*y-1,64]);
print(`(2b-1)(3b-1)/(132*133*143*144)=`):
(2*b-1)*(3*b-1)/(132*133*143*144);
A:=[164*165,(164*165)^2,164,165,
132,133,132*133,(132*133)^2,
143,144,143*144,(143*144)^2,132*133*143*144,
b,2*b,2*b-1,3*b-1,(2*b-1)*(3*b-1),
(2*b-1)*(3*b-1)/(132*133*143*144),1]:
print(`the triples [i,j,k] with i=<j and A[i]+A[j]=A[k]`):
for i from 1 to 20 do
for j from i to 20 do
for k from 1 to 20 do
if A[i]+A[j]=A[k] then print([i,j,k]) end_if:
end_for:
end_for:
end_for:
print(`the triples [i,j,k] with i=<j<20 and A[i]*A[j]=A[k]`):
for i from 1 to 19 do
for j from i to 19 do
for k from 1 to 20 do
if A[i]*A[j]=A[k] then print([i,j,k]) end_if:
end_for:
end_for:
end_for:
\end{verbatim}
\end{quote}
returns the output
\begin{verbatim}
                        `b=`
                      200526827
          `(2b-1)(3b-1)/(132*133*143*144)=`
                      667378345
 `the triples [i,j,k] with i=<j and A[i]+A[j]=A[k]`
                     [3, 20, 4]
                     [5, 20, 6]
                     [8, 12, 2]
                     [9, 20, 10]
                    [14, 14, 15]
                    [14, 16, 17]
                    [16, 20, 15]
`the triples [i,j,k] with i=<j<20 and A[i]*A[j]=A[k]`
                      [1, 1, 2]
                      [3, 4, 1]
                      [5, 6, 7]
                      [7, 7, 8]
                     [7, 11, 13]
                     [9, 10, 11]
                    [11, 11, 12]
                    [13, 19, 18]
                    [16, 17, 18]
\end{verbatim}
\noindent
At the start, the code computes the integer $b$ by applying the algorithm presented in the proof
of the Lemma. Next, the code computes \mbox{$\frac{{\textstyle (2b-1)(3b-1)}}{{\textstyle 132 \cdot 133 \cdot 143 \cdot 144}}$}.
The triples displayed on the output justify the equivalence.
\end{proof}
\newpage
\begin{theorem}\label{the2}
The statement
\[
\Bigl(328,~330,~328 \cdot 330,~(328 \cdot 330)^2,
\]
\[
264,~266,~264 \cdot 266,~(264 \cdot 266)^2,
\]
\[
286,~288,~286 \cdot 288,~(286 \cdot 288)^2,
\]
\[
250,~16,~4,~2,~1\Bigr) \in B_{17}(\N \setminus \{0\})
\]
equivalently expresses that in the domain of positive integers only the triples \mbox{$(250,~286,~328)$}
and \mbox{$(272,~264,~328)$} solve the equation
\[
(x+14)^2(x+16)^2+y^2(y+2)^2=z^2(z+2)^2
\]
The last claim about a possible solutions to the above equation can be equivalently formulated
thus: in the domain of integers greater than $1$, only the triples
\[
(10,~13,~14),~(13,~10,~14),~(265,~287,~329),~(287,~265,~329)
\]
solve the equation
\[
(x^2-1)^2+(y^2-1)^2=(z^2-1)^2
\]
For the last equation, no other solutions are known, see \cite[p.~68]{Sierpinski2}.
\end{theorem}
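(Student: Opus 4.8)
The plan is to reproduce, for this $17$-tuple, the computational strategy used for Theorem~\ref{the1}. Write $A=(A_1,\dots,A_{17})$ for the displayed tuple, so that $A_1=328$, $A_9=286$, $A_{13}=250$, $A_{14}=16$, $A_{15}=4$, $A_{16}=2$, $A_{17}=1$, and so on. First I would run the analogous {\sl MuPAD} code to print the complete list of triples $[i,j,k]$ with $i\le j$ and $A_i+A_j=A_k$, together with the complete list of triples with $A_i\cdot A_j=A_k$, and record that $A_{17}=1$. Read as implications, these relations form the system $S$ that defines membership in $B_{17}(\N\setminus\{0\})$: call a tuple $(y_1,\dots,y_{17})\in(\N\setminus\{0\})^{17}$ \emph{admissible} when it satisfies every relation that $A$ satisfies, so that membership of $A$ asserts precisely that every admissible tuple has $y_1=328$.

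Next I would show that $S$ sets up a bijection between admissible tuples and positive-integer solutions of the displayed equation. The relations $y_{17}=1$, $y_{16}=y_{17}+y_{17}$, $y_{15}=y_{16}\cdot y_{16}$ and $y_{14}=y_{15}\cdot y_{15}$ force $y_{17}=1$, $y_{16}=2$, $y_{15}=4$, $y_{14}=16$. With these constants fixed, the ``difference-$2$'' relations $y_1+y_{16}=y_2$, $y_9+y_{16}=y_{10}$, $y_5+y_{16}=y_6$ together with $y_{13}+y_{14}=y_6$ give $y_2=y_1+2$, $y_{10}=y_9+2$, $y_6=y_{13}+16$ and $y_5=y_{13}+14$; the product and square relations then make $y_3,y_4,y_7,y_8,y_{11},y_{12}$ the appropriate products and squares, and the single additive relation $y_8+y_{12}=y_4$ becomes exactly $(y_{13}+14)^2(y_{13}+16)^2+y_9^2(y_9+2)^2=y_1^2(y_1+2)^2$. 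Hence, writing $(x,y,z)=(y_{13},y_9,y_1)$, admissible tuples correspond precisely to positive-integer solutions, with $y_1=z$. Thus $A\in B_{17}(\N\setminus\{0\})$ iff every positive-integer solution satisfies $z=328$, and a bounded search confirming that $z=328$ forces $(x,y)\in\{(250,286),(272,264)\}$ upgrades this to the stated ``only two triples'' claim.

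For the reformulation I would use the identities $x^2-1=(x-1)(x+1)$, $(b+1)^2-1=b(b+2)$ and $(a+15)^2-1=(a+14)(a+16)$. Writing the first equation with variables $(a,b,c)$, the substitution $x=a+15$, $y=b+1$, $z=c+1$ turns $(x^2-1)^2+(y^2-1)^2=(z^2-1)^2$ into $(a+14)^2(a+16)^2+b^2(b+2)^2=c^2(c+2)^2$, and yields a bijection between the positive-integer solutions $(a,b,c)$ of the first equation and the solutions $(x,y,z)$ of the second with $x\ge 16$ (for which automatically $y,z\ge 2$), sending the two triples to $(265,287,329)$ and $(287,265,329)$. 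A finite search over $2\le x\le 15$ then shows that the remaining solutions in integers greater than $1$ are exactly $(10,13,14)$ and $(13,10,14)$, so the two ``only these solutions'' formulations are equivalent.

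The routine part is the code itself; the main obstacle is to verify the \emph{exactness} of the correspondence. One must read off the printed lists and confirm that every relation among the $A_i$ is structural, i.e. a consequence of the tuple being built from a solution, so that no accidental coincidence $A_i+A_j=A_k$ or $A_i\cdot A_j=A_k$ imposes an extra constraint that some solution-built tuple would violate; any overlooked relation of that kind would destroy the ``solution $\Rightarrow$ admissible tuple'' direction. Confirming that positivity is preserved throughout (so that $x,y,z\ge 1$ keeps all $17$ coordinates in $\N\setminus\{0\}$) and carrying out the two bounded searches are the remaining points that require care.
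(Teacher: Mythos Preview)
Your proposal is correct and follows essentially the same route as the paper: enumerate by computer all additive and multiplicative relations among the entries of $A$, observe that they are exactly the structural ones encoding $(x+14)^2(x+16)^2+y^2(y+2)^2=z^2(z+2)^2$ via $(x,y,z)=(y_{13},y_9,y_1)$, and handle the second equivalence by the substitution $x\mapsto a+15$, $y\mapsto b+1$, $z\mapsto c+1$. The paper's proof is terser (it just prints the relation list and declares the second equivalence ``obvious''), whereas you make explicit the two bounded searches (solutions with $z=328$, and solutions of the second equation with $2\le x\le 15$) that are needed to tighten ``$y_1$ is forced'' to the full ``only these triples'' statements; these are genuine but routine verifications.
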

\begin{proof}
The following {\sl MuPAD} code
\begin{quote}
\begin{verbatim}
A:=[328,330,328*330,(328*330)^2,
264,266,264*266,(264*266)^2,
286,288,286*288,(286*288)^2,
250,16,4,2,1]:
print(`the triples [i,j,k] with i=<j and A[i]+A[j]=A[k]`):
for i from 1 to 17 do
for j from i to 17 do
for k from 1 to 17 do
if A[i]+A[j]=A[k] then print([i,j,k]) end_if:
end_for:
end_for:
end_for:
print(`the triples [i,j,k] with i=<j<17 and A[i]*A[j]=A[k]`):
for i from 1 to 16 do
for j from i to 16 do
for k from 1 to 17 do
if A[i]*A[j]=A[k] then print([i,j,k]) end_if:
end_for:
end_for:
end_for:
\end{verbatim}
\end{quote}
returns the output
\begin{verbatim}
 `the triples [i,j,k] with i=<j and A[i]+A[j]=A[k]`
                     [1, 16, 2]
                     [5, 16, 6]
                     [8, 12, 4]
                     [9, 16, 10]
                     [13, 14, 6]
                    [16, 16, 15]
                    [17, 17, 16]
`the triples [i,j,k] with i=<j<17 and A[i]*A[j]=A[k]`
                      [1, 2, 3]
                      [3, 3, 4]
                      [5, 6, 7]
                      [7, 7, 8]
                     [9, 10, 11]
                    [11, 11, 12]
                    [15, 15, 14]
                    [16, 16, 15]
\end{verbatim}
\noindent
The triples displayed on the output justify the first equivalence.
The second equivalence is obvious.
\end{proof}
\par
The sets \mbox{$B_n(\Z)$} contain very non-trivial integer tuples as it follows from the next theorem.
\begin{theorem}\label{the3}
There is an algorithm that for every computable function \mbox{$f:\N \to \N$}
returns a positive integer $m(f)$, for which a second algorithm accepts on the input $f$ and
any integer \mbox{$n \geq m(f)$}, and returns a tuple \mbox{$(x_1,\ldots,x_n) \in B_n(\Z)$} with \mbox{$x_1=f(n)$}.
\end{theorem}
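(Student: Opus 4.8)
The plan is to exploit two facts. First, the relations that define membership in $B_n(\Z)$ let us pin down the value in a coordinate not by computing it forward from $1$, but implicitly, as the unique solution of a Diophantine condition. Second, the graph of a computable function admits a \emph{fixed-size} Diophantine definition. Concretely, I would begin from the Davis--Putnam--Robinson--Matiyasevich theorem in its effective form: from a program for $f$ one constructs, by an algorithm, a polynomial $W$ with integer coefficients and a fixed number $s$ of auxiliary variables such that for all $n,y$ one has $y=f(n)$ if and only if there exist $z_1,\ldots,z_s$ with $W(n,y,z_1,\ldots,z_s)=0$. Because $f$ is a function, for each $n$ the value $y$ with this property is unique, which is precisely the ingredient needed to force a coordinate rigidly.

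Next I would translate the single equation $W=0$ into the atomic relations $x_i=1$, $x_i+x_j=x_k$, $x_i\cdot x_j=x_k$ that govern $B_n$. A few standard devices suffice. The relation $x_i+x_i=x_i$ forces the value $0$; adjoining $x_j+x_a=x_b$, where $x_a$ is a $1$-slot and $x_b$ a $0$-slot, forces $x_j=-1$; so every integer coefficient of $W$ can be built rigidly, each monomial evaluated by a chain of products, and each term summed by additions. The assertion ``this evaluates to $0$'' is imposed by applying the $0$-trick to the slot holding the value of $W$. To keep the $\Z$-solution set honest I would force non-negativity of the existential variables by the sum-of-four-squares device, so that the integer solutions of the encoded system coincide with the intended natural-number solutions. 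All of this uses a number of slots $C_f$ depending only on $f$ (on the size of $W$ and a fixed overhead), not on $n$, since the number of ring operations evaluating a fixed polynomial is independent of how large its arguments are. Separately, the integer value $n$ itself is built from $1$ by repeated doubling and incrementing along its binary expansion, costing only $O(\log n)$ slots, and I then feed this $n$-slot into the first input of $W$. The distinguished coordinate $x_1$ is placed in the slot holding $y$, and any unused slots are padded with $1$'s.

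Rigidity then follows by reading the relations in order for an arbitrary competitor $(y_1,\ldots,y_n)$ satisfying the same atomic relations: the $1$-slots are forced to $1$, the $0$-slot to $0$, hence the built constants and the built value of $n$ are forced; the evaluation relations force the $W$-slot to equal $W(n,y_1,\cdot)$ at the competitor's values; and the $0$-trick forces that value to vanish. By the uniqueness built into the Diophantine definition of the graph of $f$, this yields $y_1=f(n)=x_1$, so the constructed tuple lies in $B_n(\Z)$ with first coordinate $f(n)$. Every entry is explicitly computed (a witness $z$ is found by an unbounded search that terminates because $f(n)$ is defined), so the second algorithm is effective. The first algorithm outputs $m(f)$ as the least $n$ for which the slot budget $O(\log n)+C_f$ does not exceed $n$; this is computable from a program for $f$ precisely because the effective form of the Diophantine theorem makes $W$, hence $C_f$, computable from that program.

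I expect the main obstacle to be conceptual rather than computational: recognizing that $f(n)$ must be captured \emph{implicitly} rather than evaluated by a straight-line arithmetic program. A forward computation from $1$ using only $+$ and $\cdot$ needs roughly as many steps as the iterated-logarithm size of its output, so a fast-growing $f$ of tower or Ackermann type could never be reached within $n$ slots that way; it is only by letting the rigidity condition of $B_n$ enforce a Diophantine equation, whose complexity is the constant $C_f$ independent of $n$, that $f(n)$ becomes available for every large $n$. The remaining care is bookkeeping: keeping the $\Z$-solution set faithful via the four-squares non-negativity, and verifying that each intermediate relation is genuinely forced, so that what one proves is rigidity of the first coordinate rather than of the auxiliary slots.
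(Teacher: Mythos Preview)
Your argument is correct and follows the same route as the paper: for each $n\ge m(f)$, produce a finite system of atomic constraints $x_i=1$, $x_i+x_j=x_k$, $x_i\cdot x_j=x_k$ whose integer solutions all have $x_1=f(n)$, and then locate an actual solution by exhaustive search. The paper imports that system as a black box from~\cite{Tyszka1} and searches in the well-order $\leq_n$ on ${\Z}^n$, whereas you reconstruct the system directly from the DPRM theorem, so your write-up is in effect a self-contained version of the paper's proof.
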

\begin{proof}
The author proved in \cite{Tyszka1} that there is an algorithm that for every computable function
\mbox{$f:\N \to \N$} returns a positive integer $m(f)$, for which a second algorithm accepts
on the input $f$ and any integer \mbox{$n \geq m(f)$}, and returns a system
\[
S \subseteq \{x_i=1,~x_i+x_j=x_k,~x_i \cdot x_j=x_k: i,j,k \in \{1,\ldots,n\}\}
\]
such that $S$ is consistent over the integers and each integer tuple $(x_1,\ldots,x_n)$ that solves
$S$ satisfies $x_1=f(n)$. Let \mbox{$\leq_n$} denote the order on \mbox{${\Z}^n$} which ranks the
tuples \mbox{$(x_1,\ldots,x_n)$} first according to \mbox{${\rm max}(|x_1|,\ldots,|x_n|)$}
and then lexicographically. The ordered set \mbox{$({\Z}^n,\leq_n)$} is isomorphic to \mbox{$(\N,\leq)$}.
To find an integer tuple \mbox{$(x_1,\ldots,x_n)$}, we solve the system~$S$ by performing the brute-force
search in the order~$\leq_n$.
\end{proof}
\par
The presented results lead to the following Conjecture.
\vskip 0.2truecm
\noindent
{\bf Conjecture.} {\em For each sufficiently large $n$, the sets $B_n(\Z)$, $B_n(\N)$ and $B_n(\N\setminus\{0\})$
are not computable.}
\vskip 0.2truecm
\par
The conclusion of the following Theorem~\ref{the4} is unconditionally true and well-known as the corollary
of the negative solution to Hilbert's Tenth Problem, see \mbox{\cite[p.~231]{Klee}}.
\begin{theorem}\label{the4}
If the set $B_n(\Z)$ is not computable for some $n$, then there exists a Diophantine
equation whose solvability in integers is logically undecidable.
\end{theorem}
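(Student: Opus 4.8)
The plan is to argue the contrapositive. I would fix once and for all a consistent, recursively axiomatizable, arithmetically sound theory $T$ (for instance Peano Arithmetic or $\mathrm{ZFC}$) relative to which ``logically undecidable'' is understood: a Diophantine equation has logically undecidable integer-solvability when $T$ proves neither that it has an integer solution nor that it has none. I would then show that if \emph{no} Diophantine equation has logically undecidable integer-solvability, then $B_n(\Z)$ is computable for every $n$; this contradicts the hypothesis and delivers the theorem. The scheme mirrors the classical corollary of the negative solution to Hilbert's Tenth Problem cited before the statement, but routes the argument through the hypothesis on $B_n(\Z)$.

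The key step is a computable reduction of membership in $B_n(\Z)$ to the solvability of a single Diophantine equation. Given $\mathbf{x}=(x_1,\ldots,x_n)\in\Z^n$, I would first list the finitely many relations of the forms $x_i=1$, $x_i+x_j=x_k$, $x_i\cdot x_j=x_k$ that $\mathbf{x}$ actually satisfies, which is decidable index by index. Writing $\Phi_{\mathbf{x}}(y)$ for the conjunction of the corresponding relations imposed on $y=(y_1,\ldots,y_n)$, the definition of $B_n$ yields directly
\[
\mathbf{x}\notin B_n(\Z)\iff \exists\,y\in\Z^n\;\bigl(\Phi_{\mathbf{x}}(y)\wedge y_1\neq x_1\bigr).
\]
To turn the right-hand side into the solvability of one equation over $\Z$, I would replace each conjunct $L=R$ of $\Phi_{\mathbf{x}}$ by its square $(L-R)^2$, encode the disequality through Lagrange's four-square theorem as $(y_1-x_1)^2=1+a^2+b^2+c^2+d^2$, and sum everything:
\[
D_{\mathbf{x}}=\sum_{(L=R)\in\Phi_{\mathbf{x}}}(L-R)^2+\bigl((y_1-x_1)^2-1-a^2-b^2-c^2-d^2\bigr)^2 .
\]
Since over $\Z$ a sum of squares vanishes exactly when each summand does, the equation $D_{\mathbf{x}}=0$ is solvable in integers if and only if $\mathbf{x}\notin B_n(\Z)$, and the map $\mathbf{x}\mapsto D_{\mathbf{x}}$ is plainly algorithmic.

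With the reduction in hand, the decision procedure is the standard proof-search argument. Assuming no Diophantine equation is logically undecidable, the solvability of each $D_{\mathbf{x}}$ is settled by $T$. To decide $\mathbf{x}\in B_n(\Z)$ I would compute $D_{\mathbf{x}}$ and enumerate all $T$-proofs until one establishes either the solvability or the unsolvability of $D_{\mathbf{x}}=0$; by assumption such a proof exists, so the search halts. Soundness of $T$ then lets me read off the answer: a proof of solvability forces $\mathbf{x}\notin B_n(\Z)$, while a proof of unsolvability forces $\mathbf{x}\in B_n(\Z)$. This is a total, correct algorithm for $B_n(\Z)$, contradicting its non-computability; hence some Diophantine equation---indeed some $D_{\mathbf{x}}$---has logically undecidable integer-solvability.

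I expect the logical half to be routine, as it is precisely the proof-search-plus-soundness scheme. The one step deserving care is the reduction itself: membership in $B_n(\Z)$ is a $\Pi_1$ (universally quantified) condition, whereas solvability is $\Sigma_1$, so the real content lies in expressing the negation faithfully as a single existential equation. That the conjunction collapses to a sum of squares and that $y_1\neq x_1$ is Diophantine over $\Z$ via four squares are both classical, so I anticipate no genuine obstacle beyond the bookkeeping of assembling $D_{\mathbf{x}}$ and verifying the soundness hypothesis on $T$ that makes the read-off correct.
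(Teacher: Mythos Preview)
Your proposal is correct and follows essentially the same architecture as the paper's proof: computably reduce non-membership in $B_n(\Z)$ to the integer solvability of a single Diophantine equation built as a sum of squares, then apply the standard proof-search argument by contradiction. The only difference is in the encoding of the disequality $y_1\neq x_1$: the paper invokes its Lemma to write this as $\exists a,b\in\Z\;\bigl(a(x_1-y_1)=(2b-1)(3b-1)\bigr)$, whereas you use Lagrange's four-square theorem; both devices serve the same purpose and the rest of the argument is identical.
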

\begin{proof}
By the Lemma, for each integers $a_1$, $y_1$ the statement $a_1 \neq y_1$ is equivalent to
\[
\exists a,b \in \Z ~a(a_1-y_1)-(2b-1)(3b-1)=0
\]
To an integer tuple $(a_1,\ldots,a_n)$ we assign the equation
\[
D_{\textstyle (a_1,\ldots,a_n)}(a,b,y_1,\ldots,y_n)=(a(a_1-y_1)-(2b-1)(3b-1))^2+
\sum_{\stackrel{\textstyle i \in \{1,\ldots,n\}}{\textstyle a_i=1}} (y_i-1)^2+
\]
\[
\sum_{\stackrel{\textstyle (i,j,k) \in \{1,\ldots,n\}^3}{\textstyle a_i+a_j=a_k}} (y_i+y_j-y_k)^2+
\sum_{\stackrel{\textstyle (i,j,k) \in \{1,\ldots,n\}^3}{\textstyle a_i \cdot a_j=a_k}} (y_i \cdot y_j-y_k)^2=0
\]
For each integers \mbox{$a_1,\ldots,a_n$}, the tuple \mbox{$(a_1,\ldots,a_n)$} does not belong to
\mbox{$B_n(\Z)$} if and only if the equation \mbox{$D_{\textstyle (a_1,\ldots,a_n)}(a,b,y_1,\ldots,y_n)=0$}
has a solution in integers \mbox{$a,b,y_1,\ldots,y_n$}. We prove that there exists an integer tuple
\mbox{$(a_1,\ldots,a_n)$} for which the solvability of the equation
\mbox{$D_{\textstyle (a_1,\ldots,a_n)}(a,b,y_1,\ldots,y_n)=0$} in integers \mbox{$a,b,y_1,\ldots,y_n$}
is logically undecidable. Suppose, on the contrary, that for each integers \mbox{$a_1,\ldots,a_n$}
the solvability of the equation \mbox{$D_{\textstyle (a_1,\ldots,a_n)}(a,b,y_1,\ldots,y_n)=0$}
can be either proved or disproved. This would yield the following algorithm for deciding whether
an integer tuple \mbox{$(a_1,\ldots,a_n)$} belongs to $B_n(\Z)$: examine all proofs
(in order of length) until for the equation \mbox{$D_{\textstyle (a_1,\ldots,a_n)}(a,b,y_1,\ldots,y_n)=0$}
a proof that resolves the solvability question one way or the other is found.
\end{proof}
\par
Similarly, but simpler, if the set $B_n(\N)$ ($B_n(\N\setminus\{0\})$) is not computable for some~$n$,
then there exists a Diophantine equation whose solvability in non-negative integers (positive integers)
is logically undecidable.

\noindent
Apoloniusz Tyszka\\
Technical Faculty\\
Hugo Ko\l{}\l{}\k{a}taj University\\
Balicka 116B, 30-149 Krak\'ow, Poland\\
E-mail address: \url{rttyszka@cyf-kr.edu.pl}
\end{document}